\newcommand{\doublespace}
   {\addtolength{\baselineskip}{0.25\baselineskip}}
\newtheorem{thm}{Theorem}[section]
\newtheorem{lem}[thm]{Lemma}
\theoremstyle{definition}
\newtheorem{remark}[thm]{Remark}
\newtheorem{example}[thm]{Example}
\numberwithin{equation}{section}
\newcommand{\R}{\mathbb{R}}   
\newcommand{\C}{\mathbb{C}}   
\newcommand{\N}{\mathbb{N}}   
\renewcommand{\epsilon}{\varepsilon}    
\newcommand{\calM}{{\mathcal{M}}} 
\newcommand{\calP}{{\mathcal{P}(\mathcal{M})}} 
\newcommand{\calT}{{\mathscr{J}}}
\newcommand{\calF}{{\Psi}}
\newcommand{\calG}{{\Phi}}
\newcommand{\scrF}{{\mathscr{F}}}
\newcommand{\bbm}{{\mathbbm{I}}}
\title[Rates of convergence for laws of the spectral maximum of free random variables]{Rates of convergence for laws of the spectral maximum of free random variables}
\author{Yuki Ueda}
\subjclass[2010]{Primary 46L54; Secondary 46L53, 60G70, 60B10.}
\keywords{Free probability theory, Free max-convolution, Free extreme value distributions}
\address{
Yuki Ueda: 
Department of Mathematics, Hokkaido University of Education, 9 Hokumon-cho, Asahikawa, Hokkaido 002-8501, Japan}
\email{ueda.yuki@a.hokkyodai.ac.jp}
\begin{document}

\maketitle  
\doublespace
\pagestyle{myheadings} 
%

\begin{abstract}
Let $\{X_n\}_n$ be a sequence of freely independent, identically distributed non-commutative random variables. Consider a sequence $\{W_n\}_n$ of the renormalized spectral maximum of random variables $X_1,\cdots, X_n$. It is known that the renormalized spectral maximum $W_n$ converges to the free extreme value distribution under certain conditions on the distribution function. In this paper, we provide a rate of convergence in the Kolmogorov distance between a distribution function of $W_n$ and the free extreme value distribution. 
\end{abstract}



\section{Introduction}

\subsection{Classical extreme value theory}
Let $\{X_i\}_i$ be a sequence of independent, identically distributed random variables on a probability space $(\Omega, \mathcal{F},\mathbb{P})$ and $G(\cdot)=\mathbb{P}(X_i\le \cdot)$ for all $i\in \N$. If there are $a_n>0$ and $b_n\in \R$ such that a random variable
$$
W_n:=\frac{\max\{X_1,\cdots, X_n\}-b_n}{a_n}
$$
converges to some random variable in distribution, then its distribution function is characterized by one of the following:
\begin{align*}
\calG_0(x):&=\exp(-e^{-x})\bbm_\R(x), \qquad \gamma=0 \hspace{2mm}\text{(Gumbel distribution)};\\
\calG_\gamma(x):&=\exp(-x^{-\gamma})\bbm_{(0,\infty)}(x), \hspace{2mm} \gamma>0 \hspace{2mm} \text{(Fr\'{e}chet distribution)};\\
\calG_\gamma(x):&=\exp(-|x|^{-\gamma})\bbm_{(-\infty,0)}(x)+\bbm_{[0,\infty)}(x), \hspace{2mm} \gamma<0 \hspace{2mm}\text{(Weibull distribution)},
\end{align*}
where $\bbm_B$ is the indicator function of a subset $B$ of $\R$. The distribution function $\Phi_\gamma$ is called the {\it extreme value distribution}. In this case, $G$ is said to be {\it in the max-domain of attraction of }$\Phi_\gamma$, denoted by $G\in \mathcal{D}_\ast(\Phi_\gamma;a_n,b_n)$ (see \cite{G43}, \cite{H70}, \cite{R87} for details). Note that a distribution function of $W_n$ is given by $x\mapsto G^n(a_nx+b_n)$ for all $x\in \R$. Assume that the distribution function of $W_n$ has a differentiable density $g_n$ for each $n\in\N$, i.e. $g_n(x)=\frac{d}{dx} G^n(a_nx+b_n)=na_nG^{n-1}(a_nx+b_n)g_n(a_nx+b_n)$ is differentiable at $x$. One of main interests in a view of probability theory and statistics is to study rates of convergence in the Kolmogorov distance which is the maximum distance between distribution functions on $\R$, that is, the Kolmogorov distance $d_K(F_1,F_2)$ between distribution functions $F_1$ and $F_2$ on $\R$ is defined by
$$
d_K(F_1,F_2):=\sup_{x\in \R} |F_1(x)-F_2(x)|.
$$ 
If $G\in \mathcal{D}_\ast(\Phi_\gamma;a_n,b_n)$, then we obtained
\vspace{-1.5mm}
\begin{align*}
d_K( G^n(a_n\cdot+b_n), \Phi_\gamma) \le C_\gamma \mathbb{E}[|\Gamma_{\ast,\gamma,n}(W_n)|],
\end{align*}
where $C_\gamma>0$ is the constant depending only on $\gamma$, and
\begin{align*}
\Gamma_{\ast,\gamma,n}(x):=\begin{cases}
1-e^x\{1+(\log g_n(x))'\}, &  \gamma=0,\\
|\gamma|-\text{sgn}(\gamma)(\gamma+1)|x|^\gamma-|x|^{\gamma+1}(\log g_n(x))', &  \gamma\neq0,
\end{cases}
\end{align*}
where 
$$
\text{sgn}(\gamma):=\begin{cases}
1, &\gamma>0,\\
-1, & \gamma<0.
\end{cases}
$$
The result on Fr\'{e}chet case ($\gamma>0$) was answered by Smith \cite{Smith} and Omey \cite{Omey}. After that, Bartholm\'{e} and Swan \cite{BS} also studied the same situation in Fr\'{e}chet case via the Stein's method. Weibull case ($\gamma<0$) can be proceed almost similarly to Fr\'{e}chet case. Recently, Gumbel case ($\gamma=0$) was also investigated by Kusumoto and Takeuchi \cite{KT20} by the Stein's method.

\subsection{Free extreme value theory}
In non-commutative probability theory, self-adjoint operators are interpreted as real-valued random variables. A remarkable phenomenon in non-commutative probability theory is an appearance of several kind of independence of random variables. In particular, free independence is the most important independence to understand various fields. Free probability theory is a non-commutative probability theory considering free random variables and it was initiated by Voiculescu to understand the free product of von Neumann algebras. In 1991, Voiculescu found a remarkable relation between free probability theory and random matrix theory (see \cite{V91} for further details). Recently, free probability theory contributes to the development of quantum information theory (see e.g. \cite{CHN17}). 

In 2006, Ben Arous and Voiculescu \cite{BV06} started free extreme value theory to understand the maximum (in Ando's sense, see \cite{A89}, \cite{BV06}, \cite{O71} for details) of free random variables. We denote by $F_X\Box \hspace{-.95em}\lor F_Y$ as a distribution function of the spectral maximum $X\lor Y$ (see Section 2 for further details) of free random variables $X$ and $Y$ with distribution functions $F_X$ and $F_Y$, respectively. The symbol $\Box \hspace{-.72em}\lor $ is called {\it free max-convolution}. Write $F^{\Box\hspace{-.55em}\lor n}$ as the $n$-fold free max-convolution of $F$. Let $\{X_i\}_i$ be a family of freely independent identically distributed random variables such that $X_i$ is distributed as a distribution function $U$ on $\R$. A distribution function of the renormalized spectral maximum 
$$
W_n:=\frac{X_1\lor \cdots \lor X_n-b_n}{a_n}, \qquad a_n>0, \hspace{2mm} b_n\in \R,
$$ 
is given by $x\mapsto U^{\Box\hspace{-.55em}\lor n}(a_nx+b_n)$. If there are $a_n>0$ and $b_n\in \R$ such that $\lim_{n\rightarrow \infty} U^{\Box\hspace{-.55em}\lor n}(a_nx+b_n)=V(x)$ for all continuous points $x$ of $V$ (actually, $x\in\R$), then $V$ is called the {\it free extreme value distribution}, denoted by $\calF_\gamma$ for some $\gamma\in \R$. In this case, $U$ is said to be {\it in the free max-domain of attraction of } $\calF_\gamma$, denoted by $U\in \mathcal{D}_{\boxplus}(\Psi_\gamma;a_n,b_n)$. After \cite{BV06}, free extreme value theory kept on advancing from several views. Ben Arous and Kargin \cite{BK10} contributed to clarify relations between convergence of free point processes and the max-domains of attraction of free extreme value distributions. Moreover, Benaych-Georges and Cabanal-Duvillard \cite{BC10} constructed random matrices such that those empirical eigenvalue distributions converge to the free extreme value distribution as matrix size goes to infinity. Grela and Nowak \cite{GN17} characterized the class of distribution functions which are in the free max-domains of attraction of free extreme value distributions. As an other advance, Vargas and Voiculescu \cite{VV18} started boolean extreme value theory (the reader can find basic concepts on boolean probability theory in \cite{SW97}). Recently, the author \cite{U19} found relations between limit theorems for classical, free and boolean max-convolutions (this result is max-analogue of Bercovici-Pata's work \cite{BP99}). Further, the author \cite{U20} and the author and Hasebe \cite{HU20} also found several relations between additive convolution semigroups and max-convolution semigroups in classical, free and boolean cases via Tucci, Haagerup and M\"{o}ller's limit theorem (see \cite{T10}, \cite{HM13}).

\subsection{Main result}
Our main interest here is to obtain a new rate of convergence toward free extreme value distributions. For $n\ge2$, we assume that $W_n$ has a density $u_n$ which satisfies $\text{supp}(u_n):=\overline{\{x: u_n(x)>0\}}=[ A_n, B_n]$ and is differentiable on $( A_n, B_n)$, where $ A_n< B_n$. Furthermore, we request a few of assumptions of $u_n$. Through the paper, we assume that $ A_n>-\infty$ since the $n$-fold free max-convolution cuts the left side of the original distribution function. If $U\in \mathcal{D}_{\boxplus}(\Psi_\gamma;a_n,b_n)$ and $n\ge2$, then we obtain
\begin{equation}\label{mainresult}
d_K(U^{\Box\hspace{-.55em}\lor n}(a_n\cdot+b_n), \Psi_\gamma) \le  \int_{ A_n}^{ B_n} |\Gamma_{\boxplus,\gamma,n}(x)|u_n(x)dx +r_{ A_n} u_n( A_n+),
\end{equation}
under a decay condition of the function $u_n$ at $\beta$,
where 
\begin{align*}
\Gamma_{\boxplus,\gamma,n}(x):=\begin{cases}
1+(\log u_n(x))', &\gamma=0,\\
1+\gamma^{-1}(1+x (\log u_n(x))'), &\gamma\neq0
\end{cases}
\end{align*}
and
\begin{align*}
r_{ A_n}:=\begin{cases}
1-e^{-| A_n|}, & \gamma=0,\\
\gamma^{-1} A_n\{1-(| A_n|\land | A_n|^{-1})^{|\gamma|}\}, &  \gamma \neq 0.
\end{cases}
\end{align*}
A different point from classical case is to remain information on the boundary of $W_n$. The remainder $r_{A_n}$ is determined by $\gamma$ and $A_n$. In particular, if a sample distribution $U$ is the classical distribution $\Phi_\gamma$ for $\gamma\in \R$, then $U=\Phi_\gamma\in \mathcal{D}_{\boxplus}(\Psi_\gamma;a_n,b_n)$ and
\begin{equation*}
d_K(\Phi_\gamma^{\Box \hspace{-.55em}\lor n} (a_n \cdot+b_n),\Psi_\gamma)\le \frac{1}{n},
\end{equation*}
where $a_n>0$ and $b_n\in \R$ are suitable constants. Note that rates of convergence toward the free extreme value distributions are the same even though $\gamma$ is an arbitrary real number.

\section{Preliminary}

Let $(\mathcal{M},\tau)$ be a tracial $W^*$-probability space, that is, $\mathcal{M}$ is a von Neumann algebra and $\tau:\mathcal{M}\rightarrow\C$ is a tracial normal faithful state on $\mathcal{M}$. We may assume that $\mathcal{M}$ acts on a Hilbert space $\mathcal{H}$ by taking the non-commutative $L^2$-space $\mathcal{H}=L^2(\mathcal{M},\tau)$. Denote by $\calM_b(I)$ the set of all Borel measurable and bounded functions on a subset $I$ of $\R$. A self-adjoint operator $X$ on $\mathcal{H}$ is said to be {\it affiliated with $\calM$} if the Borel functional calculus $f(X)$ is in $\mathcal{M}$ for any $f\in \calM_b(\R)$. Note that $X$ is a bounded self-adjoint operator affiliated with $\calM$ if and only if $X\in\calM$. In this paper, we call a self-adjoint operator affiliated with $\calM$ a {\it (non-commutative) random variables}. A {\it spectral distribution function of $X$ with respect to $\tau$} is defined by 
$$
\scrF_X(x):=\tau(\bbm_{(-\infty,x]}(X)), \qquad x\in \R.
$$ 
Denote by $\calP$ the set of all projections in $\calM$ and denote also by $\calM_{sa}$ the set of all self-adjoint operators in $\calM$. For $P,Q\in\calP$, we define  $P\lor Q$ as the projection onto $(P\lor Q)\mathcal{H}:=\text{cl}(P\mathcal{H}+Q\mathcal{H})$. Then $P\lor Q\in\calP$ and it is the maximum of $P$ and $Q$ with respect to the usual operator order. However, it was shown in \cite{K51} that the maximum of two self-adjoint operators in $\mathcal{B}(\mathcal{H})$ with respect to the operator order does not necessarily exist. Instead of the operator order, Olson \cite{O71} (see also \cite{A89}) introduced the spectral order to define the maximum of (bounded) self-adjoint operators on $\mathcal{H}$. After that, the spectral order was defined in the level of general von Neumann algebras as follows. For $X,Y\in\calM_{sa}$, we define $X\prec Y$ if we have $\bbm_{(x,\infty)}(X)\le \bbm_{(x,\infty)}(Y)$ for all $x\in\mathbb{R}$. The order $\prec$ is called the {\it spectral order}. For any $X,Y\in\calM_{sa}$, we define $X\lor Y\in\calM_{sa}$ by
\begin{align*}
\bbm_{(x,\infty)}(X\lor Y):=\bbm_{(x,\infty)}(X)\lor \bbm_{(x,\infty)}(Y), \qquad x\in\mathbb{R}.
\end{align*}
The operator $X\lor Y$ is well-defined since the right-hand side in the above identity is projection-valued, decreasing and right-continuous in the strong operator topology as a function of $x$. Moreover, $X\lor Y$ is the maximum of $X$ and $Y$ with respect to the spectral order. Finally, the definition was extended the spectral order to the set of all (unbounded) selfadjoint operators affiliated with $\calM$. Note that if $X$ and $Y$ are self-adjoint operators affiliated with $\calM$, then so is $X\lor Y$ (see \cite{C06} and \cite{BV06}). Ben Arous and Voiculescu \cite{BV06} finally calculated spectral distributions of maximum $X\lor Y$ of free random variables $X$ and $Y$ affiliated with $\calM$ as follows: $\scrF_{X\lor Y}=\max\{\scrF_X+\scrF_Y-1,0\}$. According to this calculation, we define the free max-convolution $F {\Box \hspace{-.75em} \lor} G:=\max\{F+G-1,0\}$ for any distribution functions $F,G$ on $\mathbb{R}$. Denote by $F^{\Box\hspace{-.55em}\lor n}$ the $n$-fold free max-convolution of a distribution function $F$ on $\R$ and it is calculated inductively as follows: $F^{\Box\hspace{-.55em}\lor n}=\max\{nF-(n-1),0\}$ for each $n\in \N$.
A non-degenerate distribution function $V$ is said to be {\it freely extreme value} if for any $n\in\mathbb{N}$, there exist distribution functions $U$, $a_n>0$ and $b_n\in\mathbb{R}$ such that $U^{\Box \hspace{-.55em} \lor n} (a_n \cdot+b_n) \xrightarrow{w} V(\cdot)$ as $n\rightarrow\infty$, where $\xrightarrow{w}$ denotes the weak convergence. 
In \cite{BV06}, a non-degenerate distribution function $V$ is free extreme value if and only if there exist $a>0$ and $b\in\mathbb{R}$ such that $V(ax+b)$ is one of the following distributions
\begin{align*}
\calF_0(x):&=(1-e^{-x})\bbm_{[0,\infty)}(x), \hspace{2mm} \gamma=0\hspace{2mm} \text{(free Gumbel distribution)}; \\
\calF_\gamma(x):&=(1-x^{-\gamma})\bbm_{[1,\infty)}(x), \hspace{2mm} \gamma>0 \hspace{2mm}\text{(free Fr\'{e}chet distribution)}; \\
\calF_\gamma(x):&=\{1-|x|^{-\gamma}\}\bbm_{[-1,0]}(x)+\bbm_{(0,\infty)}(x), \hspace{2mm} \gamma<0 \hspace{2mm}\text{(free Weibull distribution)}.
\end{align*}

\section{Free Gumbel case}

Let $Z_0$ be a random variable such that $\scrF_{Z_0}=\calF_0$ and $\{X_i\}_i$  a family of freely independent, identically distributed random variables. Assume that $X_i$ is distributed as a distribution function $U$ on $\R$ such that $U\in \mathcal{D}_{\boxplus}(\Psi_0;a_n,b_n)$, that is, there exist $a_n>0$ and $b_n\in\R$ such that $W_n=(X_1\lor\cdots\lor X_n-b_n)/a_n$ converges to $Z_0$ {\it in distribution}, i.e., $\scrF_{W_n}\xrightarrow{w} \calF_0$ as $n\rightarrow\infty$. For each $n\ge2$, we assume that $W_n$ has a density $u_n$ with respect to the Lebesgue measure on $\R$, that is, $\tau(f(W_n))=\int_\R f(x)u_n(x)dx$ for all $f\in\calM_b(\R)$, and the function $u_n$ satisfies
\begin{align}
&\text{supp}(u_n)=[ A_n, B_n] \text{ for some } -\infty< A_n< B_n \le\infty,   \label{G-Cond1}\\
&u_n( A_n+):=\lim_{t\rightarrow  A_n+0} u_n(t)\in [0,\infty) \text{ and } u_n( B_n-): =\lim_{t\rightarrow  B_n-0} u_n(t)=0, \label{G-Cond1-1}\\
&u_n\text{ is differentiable on }( A_n, B_n), \label{G-Cond2} \\
&\rho_n:=(\log u_n)' \text{ is bounded and continuous on } ( A_n, B_n),  \label{G-Cond3}
\end{align}
where $[ A_n, B_n]=[ A_n,\infty)$ if $ B_n=\infty$. 

We define a differential operator $\calT_0$ as follows:
$$
\calT_0 \varphi(w):=\varphi'(w)-\varphi(w), 
$$
for all absolutely continuous functions $\varphi$ on $\R$. 

Suppose that $x\in \R$. Consider a differentiable equation
\begin{equation}\label{G-diff}
\calT_0\varphi(w)=\bbm_{(-\infty, x]}(w)-\Psi_0(x).
\end{equation}
Using the definition of $\calT_0$ and the product rule, we have
$$
\frac{d}{dw}\left(e^{-w} \varphi(w) \right)=e^{-w} (\bbm_{(-\infty, x]}(w)-\Psi_0(x)).
$$
Then a bounded solution of the equation \eqref{G-diff} is given by
$$
\varphi(w) =-e^w \int_w^\infty \{\bbm_{(-\infty, x]}(t)-\Psi_0(x)\}e^{-t}dt,
$$
denoted by $\varphi_x$ for each $x\in \R$. The function $\varphi_x$ is explicitly written as follows. Let $x<0$. Then
\begin{align}\label{G-varphi1}
\varphi_x(w)=\begin{cases}
e^{w-x}-1, & w\le x,\\
0, & x<w.
\end{cases}
\end{align}
Let $x>0$. Then
\begin{align}\label{G-varphi2}
\varphi_x(w)=\begin{cases}
e^{w-x}-e^{-x}, & w\le x,\\
1-e^{-x}, & x<w.
\end{cases}
\end{align}

Note that $\|\varphi_x\|_\infty \le 1$. Then we have $|\varphi_x(t)u_n(t)|\le u_n(t)\rightarrow0$ as $t\rightarrow B_n-0$ by \eqref{G-Cond1-1}. Therefore we get 
\begin{align}\label{G-beta}
\lim_{t\rightarrow  B_n-0}\varphi_x(t)u_n(t)=0.
\end{align}
For each $x\in \R$ and $ A_n>-\infty$, we define 
$$
\eta_{0, A_n,x}:=\lim_{t\rightarrow A_n+0} \varphi_x(t)u_n(t).
$$ 

For $n\ge2$, we define
$$
\calT_{0,n}\varphi(w):=\varphi'(w)+\varphi(w)\rho_n(w),
$$
for all absolutely continuous functions $\varphi$ on $\R$.

\begin{lem}\label{G-Lem1}
For each $n\ge2$, we obtain
$$
\tau(\calT_{0,n}\varphi_x(W_n))=- \eta_{0, A_n,x}.
$$
Furthermore, we get $|\eta_{0, A_n,x}|\le (1-e^{-| A_n|})u_n( A_n+)<\infty$.
\end{lem}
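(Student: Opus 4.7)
The plan is to rewrite $\tau(\calT_{0,n}\varphi_x(W_n))$ as a Lebesgue integral against the density $u_n$ and then to recognize the integrand as a total derivative, after which boundary evaluation produces the identity. Explicitly, using the functional-calculus identity $\tau(f(W_n)) = \int_\R f(w) u_n(w)\,dw$ with $f = \calT_{0,n}\varphi_x$, and restricting the integration to the support $[A_n, B_n]$ of $u_n$, one obtains
$$
\tau(\calT_{0,n}\varphi_x(W_n)) = \int_{A_n}^{B_n}\bigl(\varphi_x'(w) + \varphi_x(w)\rho_n(w)\bigr)u_n(w)\,dw.
$$
The key observation is the product-rule identity $\rho_n(w) u_n(w) = (\log u_n)'(w)\,u_n(w) = u_n'(w)$ on $(A_n, B_n)$, which converts the integrand into $(\varphi_x u_n)'$ almost everywhere. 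The fundamental theorem of calculus then collapses this to boundary terms:
$$
\tau(\calT_{0,n}\varphi_x(W_n)) = \lim_{t\to B_n-}\varphi_x(t) u_n(t) - \lim_{t\to A_n+}\varphi_x(t)u_n(t) = -\eta_{0, A_n, x},
$$
using \eqref{G-beta} at the right endpoint and the definition of $\eta_{0, A_n, x}$ at the left.

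For the estimate, the plan is to establish the pointwise bound $|\varphi_x(w)| \le 1 - e^{-|w|}$, valid for all $x, w \in \R$, by a case analysis based on \eqref{G-varphi1} and \eqref{G-varphi2}. The nontrivial cases reduce to elementary inequalities: $1 - e^{w-x} \le 1 - e^w$ when $x < 0$ and $w \le x$ (using $-x \ge 0$); $e^{-x}(e^w - 1) \le e^{-w}(e^w - 1)$ when $x > 0$ and $0 \le w \le x$ (using $e^{-x} \le e^{-w}$); and $1 - e^{-x} \le 1 - e^{-w}$ when $0 < x < w$ (using $x \le w$). The remaining subcases follow from $e^{-x} < 1$ for $x > 0$ or from $\varphi_x \equiv 0$. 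Specializing this uniform bound at $w = A_n$ and multiplying by the finite quantity $u_n(A_n+) \in [0,\infty)$ from \eqref{G-Cond1-1} then yields $|\eta_{0, A_n, x}| \le (1 - e^{-|A_n|}) u_n(A_n+) < \infty$, as required.

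The only technical point that merits care is the justification of the fundamental theorem of calculus above: $\varphi_x$ has at most one corner (at $w = x$) and $u_n'$ exists only on the open interval $(A_n, B_n)$. Both issues are routine, since the product $\varphi_x u_n$ is absolutely continuous on every compact subinterval of $(A_n, B_n)$. Concretely, one integrates first over $[A_n + \epsilon, B_n - \epsilon]$, applies the fundamental theorem on each of the two pieces $[A_n + \epsilon, x]$ and $[x, B_n - \epsilon]$ (which agree at the common endpoint by continuity of $\varphi_x u_n$), and then passes to $\epsilon \downarrow 0$; the boundary limits exist thanks to \eqref{G-beta} at $B_n$ and the domination $|\varphi_x(w) u_n(w)| \le u_n(w)$ combined with \eqref{G-Cond1-1} at $A_n$.
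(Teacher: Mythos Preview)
Your proof is correct and follows essentially the same approach as the paper: convert $\tau(\calT_{0,n}\varphi_x(W_n))$ to an integral against $u_n$, use $\rho_n u_n = u_n'$ to recognize $(\varphi_x u_n)'$, and evaluate boundary terms via \eqref{G-beta} and the definition of $\eta_{0,A_n,x}$; for the estimate you derive the same bound $|\varphi_x(A_n)|\le 1-e^{-|A_n|}$ from the explicit formulas \eqref{G-varphi1}--\eqref{G-varphi2}. Your case analysis and your remark on handling the corner of $\varphi_x$ at $w=x$ are slightly more explicit than the paper's presentation, but the argument is the same.
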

\begin{proof}
Since $W_n$ is a self-adjoint operator affiliated with $\calM$, we get $\calT_{0,n}\varphi_x(W_n)\in \calM$ by the assumption \eqref{G-Cond3} and the definition of $\calT_{0,n}$ and representations \eqref{G-varphi1} and \eqref{G-varphi2}. Furthermore, by definitions of $\calT_{0,n}$ and $\eta_{0, A_n,x}$ and the condition \eqref{G-beta}, we observe 
\begin{align*}
\tau(\calT_{0,n}\varphi_x(W_n))&=\int_{ A_n}^{ B_n} \calT_{0,n}\varphi_x(t) u_n(t)dt\\
&=\int_{ A_n}^{ B_n} (\varphi_x'(t)+\varphi_x(t) \rho_n(t)) u_n(t)dt\\
&=\int_{ A_n}^{ B_n}(\varphi_x'(t)u_n(t)+\varphi_x(t) u_n'(t))dt\\
&=\int_{ A_n}^{ B_n} (\varphi_x(t)u_n(t))'dt\\
&=\lim_{t\rightarrow B_n-0} \varphi_x(t)u_n(t)-\eta_{0, A_n,x}\\
&=-\eta_{0, A_n,x}.
\end{align*}
It follows from the explicit representations \eqref{G-varphi1} and \eqref{G-varphi2} that $|\varphi_x(A_n)|\le 1 - e^{A_n}$ if $A_n<0$ and $|\varphi_x(A_n) |\le 1 - e^{-A_n}$ if $A_n\ge0$. Hence $|\varphi_x(A_n)|\le 1-e^{-|A_n|}$ and therefore $|\eta_{0, A_n,x}|\le (1-e^{-| A_n|})u_n( A_n+)$. By the assumption \eqref{G-Cond1-1}, we obtain $(1-e^{-| A_n|})u_n( A_n+)<\infty$.
\end{proof}

The next result is an immediate consequence of Lemma \ref{G-Lem1}.
\begin{thm}\label{G-Thm}
For each $n\ge2$, we obtain
\begin{align*}
d_K(\scrF_{W_n}, \Psi_0)\le \tau(|1+\rho_n(W_n)|)+(1-e^{-| A_n|}) u_n( A_n+).
\end{align*}
\end{thm}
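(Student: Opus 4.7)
The plan is to deploy the standard Stein-method recipe: apply $\tau(\cdot)$ to both sides of the Stein equation \eqref{G-diff} evaluated at $W_n$, so that the left-hand side becomes $\tau(\calT_0 \varphi_x(W_n))$ while the right-hand side collapses to $\scrF_{W_n}(x) - \Psi_0(x)$. The Kolmogorov distance is the supremum over $x$ of this quantity in absolute value, so the task reduces to estimating $|\tau(\calT_0\varphi_x(W_n))|$ uniformly in $x$.

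The bridge from $\calT_0$ to the density-side operator $\calT_{0,n}$ is the algebraic identity
\begin{equation*}
\calT_0\varphi(w) = \varphi'(w)-\varphi(w) = \bigl(\varphi'(w)+\varphi(w)\rho_n(w)\bigr)-\varphi(w)\bigl(1+\rho_n(w)\bigr) = \calT_{0,n}\varphi(w)-\varphi(w)\bigl(1+\rho_n(w)\bigr).
\end{equation*}
Applying $\tau$ with $\varphi=\varphi_x$ and invoking Lemma \ref{G-Lem1} to substitute $\tau(\calT_{0,n}\varphi_x(W_n))=-\eta_{0,A_n,x}$, I obtain
\begin{equation*}
\scrF_{W_n}(x)-\Psi_0(x) = \tau(\calT_0\varphi_x(W_n)) = -\eta_{0,A_n,x} - \tau\bigl(\varphi_x(W_n)(1+\rho_n(W_n))\bigr).
\end{equation*}

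Now the triangle inequality together with the uniform bound $\|\varphi_x\|_\infty\le 1$ (observed right after \eqref{G-varphi2}) and the trace inequality $|\tau(\varphi_x(W_n)Y)|\le \|\varphi_x\|_\infty\,\tau(|Y|)$ for $Y=1+\rho_n(W_n)\in\calM_{sa}$ yields
\begin{equation*}
|\scrF_{W_n}(x)-\Psi_0(x)| \le |\eta_{0,A_n,x}| + \tau\bigl(|1+\rho_n(W_n)|\bigr) \le (1-e^{-|A_n|})u_n(A_n+) + \tau\bigl(|1+\rho_n(W_n)|\bigr),
\end{equation*}
where the final step uses the explicit estimate for $|\eta_{0,A_n,x}|$ established in Lemma \ref{G-Lem1}. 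Taking the supremum over $x\in\R$ gives the claim.

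The only place that requires care is the trace inequality $|\tau(\varphi_x(W_n)Y)|\le \|\varphi_x\|_\infty\,\tau(|Y|)$; it follows from the functional calculus together with the tracial and positive nature of $\tau$, and the fact that $\varphi_x(W_n)$ and $Y$ commute (both are Borel functions of the single self-adjoint $W_n$), so that $|\varphi_x(W_n)Y| = |\varphi_x(W_n)||Y| \le \|\varphi_x\|_\infty |Y|$. Apart from this standard observation, the argument is a direct algebraic consequence of Lemma \ref{G-Lem1}, in agreement with the paper's remark that the theorem is immediate.
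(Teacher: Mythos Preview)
Your proof is correct and follows essentially the same route as the paper's: both derive the identity $\scrF_{W_n}(x)-\Psi_0(x)=-\eta_{0,A_n,x}-\tau(\varphi_x(W_n)(1+\rho_n(W_n)))$ by combining the Stein equation with Lemma~\ref{G-Lem1}, then bound using $\|\varphi_x\|_\infty\le 1$ and the estimate on $|\eta_{0,A_n,x}|$. Your added justification of the trace inequality via commutativity of Borel functions of $W_n$ is a welcome bit of extra care not spelled out in the paper.
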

\begin{proof}
For each $x\in \R$, we have
\begin{align*}
\scrF_{W_n}(x)&- \Psi_0(x)\\
&=\tau(\bbm_{(-\infty, x]}(W_n))- \Psi_0(x)\\
&=\tau(\calT_0\varphi_x(W_n)) \qquad \text{ (by definition of $\calT_0$)}\\
&=\tau(\calT_0\varphi_x(W_n))-\tau(\calT_{0,n}\varphi_x(W_n))+(-\eta_{0, A_n,x}) \hspace{2mm} \text{(by Lemma \ref{G-Lem1})}\\
&=\tau\left(\varphi_x'(W_n)-\varphi_x(W_n)-\left(\varphi_x'(W_n)+\varphi_x(W_n)\rho_n(W_n)\right)\right)-\eta_{0, A_n,x}\\
&=-\tau(\varphi_x(W_n)(1+\rho_n(W_n)))-\eta_{0, A_n,x}.
\end{align*}
Consequently, for all $x\in \R$, we get
\begin{align*}
|\scrF_{W_n}(x)- \Psi_0(x)|&\le \tau(|\varphi_x(W_n)(1+\rho_n(W_n))|) + |\eta_{0, A_n,x}|\\
&\le \tau(|1+\rho_n(W_n)|)+|\eta_{0, A_n,x}| \qquad \text{(by $\|\varphi_x\|_\infty\le 1$)}\\
&\le \tau(|1+\rho_n(W_n)|)+ (1-e^{-| A_n|}) u_n( A_n+),
\end{align*}
by Lemma \ref{G-Lem1}. By taking the supremum of the left hand side for $x$, we get the inequality in this theorem.
\end{proof}

\begin{remark}
For $n\ge2$, we assume that the function $u_n$ is written by
$$
u_n(x)=C_n(x) e^{-x}, \qquad x\in ( A_n, B_n),
$$
where $C_n:( A_n, B_n)\rightarrow (0,\infty)$ is differentiable on $( A_n, B_n)$ and satisfies that
\begin{align}
&\sup_{n\in \N} C_n( A_n+)<\infty, \label{GCond4-1}\\
&\lim_{n\rightarrow\infty}\sup_{x\in( A_n, B_n)} \left|\frac{C_n'(x)}{C_n(x)}\right|=0 \label{GCond4-2}.
\end{align}
Moreover we assume that $ A_n\rightarrow0$ as $n\rightarrow\infty$. Then we have
\begin{align*}
d_K(\scrF_{W_n},\Psi_0)&\le \tau(|1+\rho_n(W_n)|)+ (1-e^{-| A_n|}) u_n( A_n+) \\
&=\int_{ A_n}^{ B_n} \left|\frac{C_n'(x)}{C_n(x)}\right|u_n(x)dx+e^{- A_n}(1-e^{-| A_n|})C_n( A_n+)\\
&\le \sup_{x\in ( A_n, B_n)} \left|\frac{C_n'(x)}{C_n(x)}\right| \int_{ A_n}^{ B_n} u_n(x)dx+ e^{- A_n}(1-e^{-| A_n|}) \sup_{n\in\N}C_n( A_n+)\\
&= \sup_{x\in ( A_n, B_n)} \left|\frac{C_n'(x)}{C_n(x)}\right| + e^{- A_n}(1-e^{-| A_n|}) \sup_{n\in\N}C_n( A_n+)\\
&\xrightarrow{n\rightarrow\infty} 0,
\end{align*}
by the conditions \eqref{GCond4-1} and \eqref{GCond4-2} and that $ A_n\rightarrow0$ as $n\rightarrow\infty$.
\end{remark}

\begin{example}\label{G-ex}
Suppose that $n\ge2$. Assume that $X_i$ is distributed as the Gumbel distribution $\calG_0$. Since $\calG_0\in \mathcal{D}_{\boxplus}(\Psi_0;1,\log n)$, the random variable $W_n=X_1\lor \cdots \lor X_n-\log n$ converges to $Z_0$ in distribution as $n\rightarrow\infty$. A spectral distribution function of $W_n$ is given by
$$
\scrF_{W_n}(t)=\calG_0^{\Box\hspace{-.55em}\lor n} (t+\log n)=\max\left\{n\exp\left(-\frac{e^{-t}}{n} \right)-(n-1),0\right\}, \qquad t\in\R,
$$
so that a density function $u_n$ of the random variable $W_n$ is 
$$
u_n(t):=\frac{d}{dt}\scrF_{W_n}(t)=e^{-t}\exp\left(-\frac{e^{-t}}{n}\right),
$$
for all $t> A_n:= -\log \left\{ -n\log\left(1-\frac{1}{n}\right)\right\}\in(-\infty,0)$, where $ B_n=\infty$. A direct calculation shows that
\begin{align*}
u_n( A_n+)&=-(n-1) \log \left(1-\frac{1}{n} \right)\in (0,\infty),\\
u_n( B_n-)&=0.
\end{align*}
It is easy to see that $u_n$ is differentiable on $( A_n,\infty)$. Moreover, we have
$$
\rho_n(t):=(\log u_n(t))'=-1+\frac{e^{-t}}{n},\qquad  t\in ( A_n,\infty).
$$
It is clear that $\rho_n$ is bounded and continuous on $( A_n,\infty)$. Finally, the function $u_n$ satisfies four conditions \eqref{G-Cond1}-\eqref{G-Cond3}. 

Using Theorem \ref{G-Thm}, for $n\ge2$, we obtain
\begin{align*}
d_K&(\scrF_{W_n},\Psi_0)\\
&\le \tau(|1+\rho_n(W_n)|)+(1-e^{-| A_n|}) u_n( A_n+)\\
&=\int_{ A_n}^\infty |1+\rho_n(x)|u_n(x)dx+(1-e^{ A_n})\left(1-\frac{1}{n}\right) \log \left(1-\frac{1}{n} \right)^{-n} \\
&=\frac{1}{n}\int_{ A_n}^\infty e^{-2t}\exp \left(-\frac{e^{-t}}{n}\right)dt-\left\{1+ \frac{1}{n\log\left(1-\frac{1}{n}\right)} \right\} (n-1) \log \left(1-\frac{1}{n} \right) \\
&=\int_{ A_n}^\infty e^{-t} \left\{\exp \left(-\frac{e^{-t}}{n}\right)\right\}'dt-(n-1) \left\{ \log\left(1-\frac{1}{n}\right)+\frac{1}{n}\right\}\\
&=-e^{- A_n}\exp\left(-\frac{e^{- A_n}}{n}\right) + \int_{ A_n}^\infty e^{-t}\exp\left(-\frac{e^{-t}}{n}\right)dt-(n-1) \left\{ \log\left(1-\frac{1}{n}\right)+\frac{1}{n}\right\}\\
&=n \int_{ A_n}^\infty \left\{\exp\left(-\frac{e^{-t}}{n}\right)\right\}'dt-1+\frac{1}{n}\\
&=\frac{1}{n}.
\end{align*}
It is obvious that $d_K(\calF_{W_1},\Psi_0) = d_K(\calF_{X_1},\Psi_0)\le 1$. Therefore we get
$$
d_K(\calF_{W_n},\Psi_0)\le \frac{1}{n}
$$
for all $n\ge1$.
\end{example}

\section{Free Fr\'{e}chet case}

Consider $\gamma>0$. Now let us consider $Z_\gamma$ a random variable such that $\scrF_{Z_\gamma}=\calF_\gamma$ and $\{X_i\}_i$ a family of freely independent, identically distributed random variables. Suppose that  $X_i$ is distributed as a distribution function $U$ on $\R$ such that $U\in \mathcal{D}_\boxplus(\Psi_\gamma;a_n,b_n)$. Then $W_n=(X_1\lor \cdots \lor X_n-b_n)/a_n$ converges to $Z_\gamma$ in distribution as $n\rightarrow\infty$.
For $n\ge2$, we assume that $W_n$ has a density $u_n$ with respect to the Lebesgue measure on $\R$ satisfying the following conditions:
\begin{align}
&\text{supp}(u_n)=[ A_n, B_n] \text{ for some }0< A_n< B_n\le\infty, \label{F-Cond1}\\
&\lim_{t\rightarrow A_n+0}tu_n(t)\in [0,\infty) \text{ and } \lim_{t\rightarrow B_n-0}tu_n(t)=0, \label{F-Cond1-1} \\
&u_n\text{ is differentiable on } ( A_n, B_n), \label{F-Cond2}\\
&t\mapsto t\rho_n(t)  \text{ is bounded and continuous on } ( A_n, B_n). \label{F-Cond4}
\end{align}

For $\gamma>0$, we define a differential operator $\calT_\gamma$ as follows:
$$
\calT_\gamma \varphi(w):=\gamma^{-1} w\varphi'(w)-\varphi(w),
$$
for all absolutely continuous functions $\varphi$ on $(0,\infty)$.

Suppose that $x\in \R$. Consider the following ordinary differential equation
\begin{equation}\label{F-diff}
\calT_\gamma\varphi(w)=\bbm_{(-\infty,x]}(w)-\Psi_\gamma(x), \qquad w>0.
\end{equation}
The definition of $\calT_\gamma$ implies that
$$
\frac{d}{dw}\left(  w^{-\gamma} \varphi(w) \right)=\gamma w^{-\gamma-1} \left(\bbm_{(-\infty,x]}(w)-\Psi_\gamma(x)\right).
$$
Then a bounded solution of the equation \eqref{F-diff} is given by
$$
\varphi(w)=-\gamma w^{\gamma} \int_w^{\infty} \{\bbm_{(-\infty,x]}(t)-\Psi_\gamma(x)\}t^{-\gamma-1}dt, \qquad w>0,
$$
denoted by $\varphi_x$ for each $x\in \R$. Furthermore, the function $\varphi_x$ is explicitly written as follows. For $x\le 0$,
\begin{equation}\label{F-sol1}
\varphi_x(w)=0, \qquad w>0.
\end{equation}
For $0< x<1$,
\begin{equation}
\varphi_x(w)=\begin{cases}
w^\gamma x^{-\gamma}-1, &0<w\le x,\\
0, & x<w.
\end{cases}
\end{equation}
For $x\ge 1$,
\begin{equation}\label{F-sol2}
\varphi_x(w)=\begin{cases}
x^{-\gamma}(w^\gamma-1), & 0<w\le x,\\
1-x^{-\gamma}, & x<w.
\end{cases}
\end{equation}
Note that $\|\varphi_x\|_\infty \le 1$. Then we obtain $|t \varphi_x(t)u_n(t)| \le t u_n(t)\rightarrow0$ as $t\rightarrow B_n-0$ by \eqref{F-Cond1-1}. Therefore we have
\begin{align}\label{F-beta}
\lim_{t\rightarrow B_n-0}t\varphi_x(t)u_n(t)=0.
\end{align}
For each $x\in \R$ and $ A_n\ge0$, we define
$$
\eta_{\gamma, A_n,x}:=\lim_{t\rightarrow A_n+0} t\varphi_x(t)u_n(t).
$$

For $\gamma>0$ and $n\ge2$, we define the following operator:
$$
\calT_{\gamma,n}\varphi(w):=\gamma^{-1} \left(w\varphi'(w)+\varphi(w)(1+w\rho_n(w))\right), \qquad w>0,
$$
for all absolutely continuous functions $\varphi$ on $(0,\infty)$.

\begin{lem}\label{F-Lem3} 
For each $n\ge2$ and $x\in \R$, we obtain
$$
\tau(\calT_{\gamma,n}\varphi_x(W_n))= -\gamma^{-1}\eta_{\gamma, A_n,x}.
$$
Moreover, we have $|\eta_{\gamma, A_n,x}|\le  A_n  (1-( A_n\land  A_n^{-1})^{\gamma}) u_n( A_n+)<\infty$.
\end{lem}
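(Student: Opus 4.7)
The proof will closely parallel the Gumbel case worked out in Lemma \ref{G-Lem1}, so the plan is to follow the same two-step template: (i) identify $\calT_{\gamma,n}\varphi_x(W_n)$ as a total derivative of $t\varphi_x(t)u_n(t)$, so that its $\tau$-expectation collapses into boundary terms, and (ii) bound those boundary terms using the explicit piecewise formulas \eqref{F-sol1}--\eqref{F-sol2}.

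For the identity, first I would invoke \eqref{F-Cond4} together with the explicit boundedness of $\varphi_x$ to verify that $\calT_{\gamma,n}\varphi_x(W_n)\in\calM$, so the trace integral is well-defined via the density $u_n$. Then I would compute
\begin{align*}
\tau(\calT_{\gamma,n}\varphi_x(W_n))
&=\gamma^{-1}\int_{A_n}^{B_n}\bigl(t\varphi_x'(t)u_n(t)+\varphi_x(t)u_n(t)+t\varphi_x(t)u_n'(t)\bigr)\,dt\\
&=\gamma^{-1}\int_{A_n}^{B_n}\bigl(t\varphi_x(t)u_n(t)\bigr)'\,dt,
\end{align*}
using $\rho_n u_n=u_n'$ on $(A_n,B_n)$. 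The fundamental theorem of calculus together with \eqref{F-beta} then gives $\tau(\calT_{\gamma,n}\varphi_x(W_n))=-\gamma^{-1}\eta_{\gamma,A_n,x}$, exactly as in Lemma \ref{G-Lem1}.

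For the quantitative bound, I would use $|\eta_{\gamma,A_n,x}|\le A_n|\varphi_x(A_n{+})|u_n(A_n{+})$ and check that $|\varphi_x(A_n)|\le 1-(A_n\land A_n^{-1})^{\gamma}$ by a short case analysis on \eqref{F-sol1}--\eqref{F-sol2}. Namely, for $x\le 0$ the bound is trivial; for $0<x<1$ the only nontrivial case is $A_n\le x<1$ (forcing $A_n<1$), where $|\varphi_x(A_n)|=1-A_n^{\gamma}x^{-\gamma}\le 1-A_n^{\gamma}$ since $x^{-\gamma}>1$; and for $x\ge 1$ one splits into $A_n\le x$ (then $|\varphi_x(A_n)|=x^{-\gamma}|A_n^{\gamma}-1|$, and the bound reduces to $x^{-\gamma}A_n^{\gamma}\le 1$) and $A_n>x$ (then $|\varphi_x(A_n)|=1-x^{-\gamma}$, and the bound reduces to $(A_n\land A_n^{-1})^{\gamma}\le x^{-\gamma}$, forcing $A_n>1$ and hence $A_n^{-\gamma}\le x^{-\gamma}$). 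Finiteness then follows at once from the assumption $\lim_{t\to A_n+0}tu_n(t)\in[0,\infty)$ in \eqref{F-Cond1-1}, which gives $A_n u_n(A_n{+})<\infty$.

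The main subtlety is really just the case analysis in the boundary bound, where the competing factors $A_n^{\gamma}$, $A_n^{-\gamma}$, and $x^{\pm\gamma}$ must be compared carefully to produce the unified form $1-(A_n\land A_n^{-1})^{\gamma}$; the rest of the argument is a verbatim adaptation of Lemma \ref{G-Lem1} with the weight $t$ replacing the constant $1$ inside the primitive.
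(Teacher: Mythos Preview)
Your proposal is correct and follows essentially the same approach as the paper's own proof: both reduce the trace to the boundary term via the total-derivative identity $(t\varphi_x(t)u_n(t))'$, invoke \eqref{F-beta} at $B_n$, and then bound $|\varphi_x(A_n)|$ from the explicit formulas \eqref{F-sol1}--\eqref{F-sol2}. The only cosmetic difference is that the paper organizes the boundary estimate by cases on $A_n$ (namely $0<A_n<1$ versus $A_n\ge 1$) rather than on $x$, arriving at the same unified bound $1-(A_n\land A_n^{-1})^{\gamma}$.
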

\begin{proof}
Since $W_n$ is a self-adjoint operator affiliated with $\calM$, we have $\calT_{\gamma,n}\varphi_x(W_n)\in \calM$ by the assumption \eqref{F-Cond4}, the definition of $\calT_{\gamma,n}$ and representations \eqref{F-sol1}-\eqref{F-sol2}. From the definitions of $\calT_{\gamma,n}$ and $\eta_{\gamma, A_n,x}$ and the condition \eqref{F-beta}, we have
\begin{align*}
\tau(\calT_{\gamma,n}\varphi_x(W_n))&=\int_{ A_n}^{ B_n} \calT_{\gamma,n}\varphi_x(t) u_n(t)dt\\
&=\gamma^{-1}\int_{ A_n}^{ B_n}  \{t\varphi_x'(t)+\varphi_x(t) (1+t\rho_n(t))\} u_n(t)dt\\
&=\gamma^{-1}\int_{ A_n}^{ B_n} \{\varphi_x'(t)tu_n(t)+\varphi_x(t)( u_n(t)+tu_n'(t))\}dt\\
&=\gamma^{-1}\int_{ A_n}^{ B_n}  (\varphi_x(t)tu_n(t))'dt\\
&= -\gamma^{-1}\eta_{\gamma, A_n,x}.
\end{align*}
Thanks to \eqref{F-sol1}-\eqref{F-sol2} we observe that $|\varphi_x( A_n)|\le 1-  A_n^\gamma$ if $0<  A_n<1$ and $|\varphi_x( A_n)|\le 1-  A_n^{-\gamma}$ if $ A_n\ge 1$. Consequently, we obtain $|\varphi_x( A_n)|\le 1- ( A_n\land  A_n^{-1})^\gamma$, and therefore $|\eta_{\gamma, A_n,x}|\le  A_n \{1-( A_n\land  A_n^{-1})^\gamma\} u_n( A_n+)$. It follows from \eqref{F-Cond1-1} that $A_n \{1-( A_n\land  A_n^{-1})^\gamma\} u_n( A_n+)<\infty$.
\end{proof}

\begin{thm}\label{F-Thm}
For each $n\ge2$ and $\gamma>0$, we have 
\begin{align*}
d_K(\scrF_{W_n},\Psi_\gamma)\le  \tau(|1+\gamma^{-1} (1+W_n\rho_n(W_n))|)+ \gamma^{-1} A_n \{1-( A_n\land  A_n^{-1})^\gamma\} u_n( A_n+).
\end{align*}
\end{thm}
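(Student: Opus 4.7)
The plan is to mimic the proof of Theorem \ref{G-Thm} with the Stein-type operators $\calT_\gamma$ and $\calT_{\gamma,n}$ replacing $\calT_0$ and $\calT_{0,n}$. First, fix $x\in\R$ and apply the differential equation \eqref{F-diff} at $W_n$. Since $\supp(u_n)\subset(0,\infty)$ by \eqref{F-Cond1}, the operator $\varphi_x(W_n)$ is well-defined via functional calculus, and taking the trace gives
$$
\scrF_{W_n}(x)-\Psi_\gamma(x) \;=\; \tau(\calT_\gamma\varphi_x(W_n)).
$$
Invoking Lemma \ref{F-Lem3} to rewrite the zero $0 = \tau(\calT_{\gamma,n}\varphi_x(W_n)) + \gamma^{-1}\eta_{\gamma,A_n,x}$ into the right-hand side, I would then have
$$
\scrF_{W_n}(x)-\Psi_\gamma(x) \;=\; \tau(\calT_\gamma\varphi_x(W_n)) - \tau(\calT_{\gamma,n}\varphi_x(W_n)) - \gamma^{-1}\eta_{\gamma,A_n,x}.
$$

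The key algebraic observation is that the $\gamma^{-1}w\varphi'(w)$ contribution cancels when the two operators are subtracted, leaving the pointwise identity
$$
\calT_\gamma\varphi(w) - \calT_{\gamma,n}\varphi(w) \;=\; -\varphi(w)\bigl(1 + \gamma^{-1}(1 + w\rho_n(w))\bigr),
$$
valid for every absolutely continuous $\varphi$ on $(0,\infty)$. Substituting this into the previous display, applying the triangle inequality, and using $\|\varphi_x\|_\infty\le 1$ (which is immediate from the explicit formulas \eqref{F-sol1}--\eqref{F-sol2}) yields
$$
|\scrF_{W_n}(x)-\Psi_\gamma(x)| \;\le\; \tau\bigl(|1 + \gamma^{-1}(1 + W_n\rho_n(W_n))|\bigr) + \gamma^{-1}|\eta_{\gamma,A_n,x}|.
$$

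To finish, I would insert the boundary bound $|\eta_{\gamma,A_n,x}| \le A_n\{1 - (A_n\land A_n^{-1})^\gamma\}u_n(A_n+)$ supplied by Lemma \ref{F-Lem3} and take the supremum over $x\in\R$. I do not anticipate a real obstacle here: all the analytic content — the integration by parts and the vanishing of the boundary term at $B_n$ via \eqref{F-beta} — has already been absorbed into Lemma \ref{F-Lem3}, while boundedness of $\calT_{\gamma,n}\varphi_x(W_n)$ as an element of $\calM$ (so that $\tau$ is applicable) is guaranteed by condition \eqref{F-Cond4} on $w\mapsto w\rho_n(w)$ together with the piecewise explicit forms of $\varphi_x$ and $\varphi_x'$. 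The only point requiring care is the telescoping identity for $\calT_\gamma - \calT_{\gamma,n}$, which depends on the correct $\gamma^{-1}$ normalization in the definition of $\calT_{\gamma,n}$; the rest is bookkeeping.
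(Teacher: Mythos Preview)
Your proposal is correct and follows essentially the same route as the paper: you apply the Stein equation \eqref{F-diff} at $W_n$, use Lemma~\ref{F-Lem3} to insert $\tau(\calT_{\gamma,n}\varphi_x(W_n))+\gamma^{-1}\eta_{\gamma,A_n,x}=0$, exploit the cancellation $\calT_\gamma\varphi-\calT_{\gamma,n}\varphi=-\varphi\,(1+\gamma^{-1}(1+w\rho_n(w)))$, bound via $\|\varphi_x\|_\infty\le 1$ and the $\eta$-estimate from Lemma~\ref{F-Lem3}, and take the supremum over $x$. This matches the paper's argument step for step.
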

\begin{proof}
For each $x\in \R$, we have
\begin{align*}
\scrF_{W_n}(x)&-\Psi_\gamma(x)\\
&=\tau(\bbm_{(-\infty,x]}(W_n))-\Psi_\gamma(x)\\
&=\tau(\calT_\gamma\varphi_x(W_n)) \qquad \text{ (by definition of $\calT_\gamma$)}\\
&=\tau(\calT_\gamma\varphi_x(W_n))-\tau(\calT_{\gamma,n}\varphi_x(W_n))-\gamma^{-1}\eta_{\gamma, A_n,x} \qquad \text{ (by Lemma \ref{F-Lem3})}\\
&=-\tau\left(\varphi_x(W_n) ( 1+\gamma^{-1}(1+W_n\rho_n(W_n))) \right)-\gamma^{-1}\eta_{\gamma, A_n,x}.
\end{align*}
Therefore we obtain
\begin{align*}
|\scrF_{W_n}(x)&-\Psi_\gamma(x)| \\
&\le \tau\left(|\varphi_x(W_n)(1+\gamma^{-1}(1+W_n\rho_n(W_n)))| \right)+ \gamma^{-1}|\eta_{\gamma, A_n,x}|\\
&\le \tau \left(|1+\gamma^{-1}(1+W_n\rho_n(W_n))| \right) +\gamma^{-1}|\eta_{\gamma, A_n,x}| \qquad \text{(by $\|\varphi_x\|_\infty \le 1$)}\\
&\le \tau \left(|1+\gamma^{-1}(1+W_n\rho_n(W_n))| \right)+ \gamma^{-1} A_n \{1-( A_n\land  A_n^{-1})^\gamma\} u_n( A_n+)
\end{align*}
by Lemma \ref{F-Lem3}. By taking supremum of the left hand side for $x$, we get the inequality in this theorem.
\end{proof}

\begin{remark}
For $n\ge2$, we assume that the function $u_n$ is of the form:
$$
u_n(x)=C_n(x)x^{-\gamma-1}, \qquad x\in ( A_n, B_n),
$$
where $C_n:( A_n, B_n)\rightarrow(0,\infty)$ is differentiable on $( A_n, B_n)$ and satisfies that
\begin{align}
&\sup_{n\in \N} C_n( A_n+)<\infty, \label{F-Cond5-1}\\
& \lim_{n\rightarrow\infty} \sup_{x\in ( A_n, B_n)} \left| \frac{xC_n'(x)}{C_n(x)}\right|=0. \label{F-Cond5-2}
\end{align}
Moreover, we assume that $ A_n\rightarrow1$ as $n\rightarrow\infty$. Then we obtain
\begin{align*}
d_K&(\scrF_{W_n},\Psi_\gamma) \\
& \le \tau\left(|1+\gamma^{-1}(1+W_n\rho_n(W_n))| \right)+ \gamma^{-1} A_n \{1-( A_n\land  A_n^{-1})^\gamma\} u_n( A_n+)\\
&=\gamma^{-1}\int_{ A_n}^{ B_n}\left| \frac{xC_n'(x)}{C_n(x)}\right|u_n(x)dx+ \gamma^{-1} \{1-( A_n\land  A_n^{-1})^\gamma\}  A_n^{-\gamma} C_n( A_n+)\\
&\le \gamma^{-1}\sup_{x\in ( A_n, B_n)} \left| \frac{xC_n'(x)}{C_n(x)}\right| + \gamma^{-1}  A_n^{-\gamma} \{1-( A_n\land  A_n^{-1})^\gamma\}  \sup_{n\in \N}u_n( A_n+)\\
&\xrightarrow{n\rightarrow\infty}0,
\end{align*}
by the conditions \eqref{F-Cond5-1} and \eqref{F-Cond5-2} and that $ A_n\rightarrow1$ as $n\rightarrow\infty$.
\end{remark}

\begin{example}\label{F-ex}
Suppose that $n\ge2$. Let us consider $X_i$ a random variable distributed as the Fr\'{e}chet distribution $\calG_\gamma$ for some $\gamma>0$. Since $\Phi_\gamma\in \mathcal{D}_\boxplus(\Psi_\gamma; n^{\frac{1}{\gamma}},0)$, the random variable
$W_n=(X_1\lor \cdots \lor X_n)/n^{\frac{1}{\gamma}}$ converges to $Z_\gamma$ in distribution as  $n\rightarrow\infty$. A spectral distribution function of $W_n$ is
$$
\scrF_{W_n}(t)=\calG_\gamma^{\Box\hspace{-.55em}\lor n} (n^{\frac{1}{\gamma}}t)=\max\left\{n\exp\left(-\frac{t^{-\gamma}}{n} \right)-(n-1),0\right\}, \qquad t\in\R.
$$
A density function $u_n$ of random variable $W_n$ is given by
$$
u_n(t):=\frac{d}{dt}\scrF_{W_n}(t)=\gamma t^{-\gamma-1}\exp\left(-\frac{t^{-\gamma}}{n}\right),
$$
for all $t> A_n:=\left\{-n\log \left(1-\frac{1}{n}\right)\right\}^{-\frac{1}{\gamma}}\in (0,1)$, where $ B_n=\infty$. A straightforward computation shows that 
\begin{align*}
\lim_{t\rightarrow A_n+0} tu_n(t)&=-\gamma (n-1)\log\left(1-\frac{1}{n}\right) \in (0,\infty),\\
\lim_{t\rightarrow B_n-0} tu_n(t)&=0.
\end{align*}
It is easy to see that $u_n$ is differentiable on $( A_n,\infty)$. Moreover, we have
$$
\rho_n(t):=(\log u_n(t))'=-\frac{\gamma+1}{t}+\frac{\gamma}{n}t^{-\gamma-1} ,\qquad  t\in ( A_n,\infty).
$$
It is clear that $t\mapsto t\rho_n(t)$ is bounded and continuous on $( A_n,\infty)$. 
Therefore the function $u_n$ satisfies the conditions \eqref{F-Cond1}-\eqref{F-Cond4}.

Using Theorem \ref{F-Thm}, for $n\ge2$, we observe
\begin{align*}
d_K&(\scrF_{W_n},\Psi_\gamma)\\
&\le \tau(|1+\gamma^{-1} (1+W_n\rho_n(W_n))|)+\gamma^{-1}  A_n \{1-( A_n\land  A_n^{-1})^\gamma\} u_n( A_n+)\\
&=\int_{ A_n}^\infty |1+\gamma^{-1}(1+t\rho_n(t))|u_n(t)dt +\gamma^{-1} A_n (1- A_n^{\gamma})u_n( A_n+)\\
&=\frac{\gamma}{n}\int_{ A_n}^\infty t^{-2\gamma-1}\exp\left(-\frac{t^{-\gamma}}{n}\right)dt-(n-1) \left\{\log\left(1-\frac{1}{n}\right)+\frac{1}{n} \right\}\\
&=\int_{ A_n}^\infty t^{-\gamma} \left\{\exp\left(-\frac{t^{-\gamma}}{n}\right) \right\}'dt -(n-1)\left\{\log\left(1-\frac{1}{n}\right)+\frac{1}{n} \right\}\\
&=- A_n^{-\gamma} \exp\left(-\frac{A_n^{-\gamma}}{n} \right)  +\gamma\int_{ A_n}^\infty t^{-\gamma-1} \exp\left(-\frac{t^{-\gamma}}{n}\right)dt-(n-1)\left\{\log\left(1-\frac{1}{n}\right)+\frac{1}{n} \right\} \\
&=(n-1)\log\left(1-\frac{1}{n}\right)+ n \int_{ A_n}^\infty \left\{\exp\left(-\frac{t^{-\gamma}}{n} \right)\right\}'dt-(n-1)\left\{\log\left(1-\frac{1}{n}\right)+\frac{1}{n} \right\} \\
&=\frac{1}{n}.
\end{align*}
It is obvious that $d_K(\calF_{W_1},\Psi_\gamma)=d_K(\calF_{X_1},\Psi_\gamma)\le 1$. Finally, we obtain
$$
d_K(\calF_{W_n},\Psi_\gamma) \le \frac{1}{n}
$$
for all $n\in \N$. The rate of convergence is the same as the one in Example \ref{G-ex}. 
\end{example}

\section{Free Weibull case}

Give a number $\gamma< 0$. Let us consider $Z_\gamma$ a random variable such that $\scrF_{Z_\gamma}=\calF_\gamma$ and $\{X_i\}_i$ a family of freely independent, identically distributed random variables. Suppose that $X_i$ is distributed as a distribution function $U$ on $\R$ such that $U\in \mathcal{D}_\boxplus(\Psi_\gamma;a_n,b_n)$. Then $W_n=(X_1\lor \cdots \lor X_n-b_n)/a_n$ converges to $Z_\gamma$ in distribution as $n\rightarrow\infty$. For $n\ge2$, we assume that a density function $u_n$ of $W_n$ satisfies the conditions:
\begin{align}
&\text{supp}(u_n)=[ A_n, B_n] \text{ for some } -\infty< A_n< B_n\le 0, \label{W-Cond1}\\
&\lim_{t\rightarrow A_n+0} |t|u_n(t)\in [0,\infty) \text{ and } \lim_{t\rightarrow B_n-0}tu_n(t)=0, \label{W-Cond1-1}\\
&u_n \text{ is differentiable on } (A_n,B_n), \label{W-Cond2}\\
&t\mapsto t\rho_n(t) \text{ is bounded and continuous on } ( A_n, B_n).\label{W-Cond4}
\end{align}

For $\gamma<0$, we define a differential operator $\calT_\gamma$ as follows:
$$
\calT_\gamma \varphi(w):=-\gamma^{-1} w\varphi'(w)+\varphi(w), \qquad w<0,
$$
for all absolutely continuous functions $\varphi$ on $(-\infty,0)$. 

Suppose that $x\in \R$. Consider the following the ordinary differential equation
\begin{equation}\label{W-diff}
\calT_\gamma\varphi(w)=\bbm_{(-\infty,x]}(w)-\Psi_\gamma(x), \qquad w<0.
\end{equation}
The definition of $\calT_\gamma$ implies that
$$
\frac{d}{dw}\left\{ |w|^{-\gamma}\varphi(w)\right\}=\gamma|w|^{-\gamma-1}\{ \bbm_{(-\infty,x]}(w)-\Psi_\gamma(x)\}.
$$
Therefore a bounded solution of \eqref{W-diff} is given by
\begin{align*}
\varphi(w)=-\gamma |w|^{\gamma} \int_w^0 \{\bbm_{(-\infty,x]}(t)-\Psi_\gamma(x)\}|t|^{-\gamma-1}dt, \qquad  w<0,
\end{align*}
denoted by $\varphi_x$ for each $x\in \R$. We explicitly get a representation of the function $\varphi_x$ as follows. For $x<-1$,
\begin{equation}\label{W-sol1}
\varphi_x(w)=\begin{cases}
1-|w|^\gamma |x|^{-\gamma}, & w\le x,\\
0, & x<w<0.
\end{cases}
\end{equation}
For $-1\le x\le 0$,
\begin{equation}
\varphi_x(w)=\begin{cases}
|x|^{-\gamma}( 1-|w|^\gamma),  & w\le x,\\
|x|^{-\gamma}-1, & x<w<0.
\end{cases}
\end{equation}
For $x>0$,
\begin{equation}\label{W-sol2}
\varphi_x(w)=0, \qquad w<0.
\end{equation}
Note that $\|\varphi_x\|_\infty\le 1$. Hence we get $|t\varphi_x(t)u_n(t)|\le |t u_n(t)|\rightarrow0$ as $t\rightarrow B_n-0$ by \eqref{W-Cond1-1}. Therefore we have 
\begin{align}\label{W-beta}
\lim_{t\rightarrow B_n-0}t\varphi_x(t)u_n(t)=0.
\end{align}
For each $x\in \R$ and $ A_n\ge0$, we define
$$
\eta_{\gamma, A_n,x}:=\lim_{t\rightarrow A_n+0} t\varphi_x(t)u_n(t).
$$

For $\gamma<0$ and $n\ge2$, we define the following operator:
$$
\calT_{\gamma,n}\varphi(w):=-\gamma^{-1} \left(w\varphi'(w)+\varphi(w)(1+w\rho_n(w))\right), \qquad w<0,
$$
for all absolutely continuous functions $\varphi$ on $(-\infty,0)$. 

\begin{lem}\label{W-Lem3}
For each $n\ge2$ and $x\in \R$, we obtain
$$
\tau(\calT_{\gamma,n}\varphi_x(W_n))=\gamma^{-1}\eta_{\gamma, A_n,x}.
$$
Moreover, we have $|\eta_{\gamma, A_n,x}|\le | A_n|\{1-(| A_n|\land | A_n|^{-1})^{-\gamma}\}u_n( A_n+)<\infty$.
\end{lem}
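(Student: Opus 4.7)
The plan is to mirror the strategy already used for the free Fréchet case in Lemma \ref{F-Lem3}, adapting signs and the explicit solution formulas to the Weibull setting. The argument splits naturally into two parts: first an integration-by-parts identity that produces the boundary term $\gamma^{-1}\eta_{\gamma,A_n,x}$, and second a case analysis on the size of $|A_n|$ that controls $|\varphi_x(A_n)|$.

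First I would note that since $W_n$ is affiliated with $\calM$ and condition \eqref{W-Cond4} says $t\mapsto t\rho_n(t)$ is bounded on $(A_n,B_n)$, together with the explicit representations \eqref{W-sol1}--\eqref{W-sol2} giving $\|\varphi_x\|_\infty \le 1$ and a bounded derivative on the compact support, the operator $\calT_{\gamma,n}\varphi_x(W_n)$ lies in $\calM$, so $\tau(\calT_{\gamma,n}\varphi_x(W_n)) = \int_{A_n}^{B_n}\calT_{\gamma,n}\varphi_x(t)u_n(t)\,dt$. Next I would substitute $\rho_n(t) = u_n'(t)/u_n(t)$ and expand the definition of $\calT_{\gamma,n}$ to get
\begin{align*}
\calT_{\gamma,n}\varphi_x(t)u_n(t) &= -\gamma^{-1}\bigl(t\varphi_x'(t)u_n(t) + \varphi_x(t)u_n(t) + t\varphi_x(t)u_n'(t)\bigr) \\
&= -\gamma^{-1}\bigl(t\varphi_x(t)u_n(t)\bigr)'.
\end{align*}
Integrating and using \eqref{W-beta} at $B_n$ together with the definition of $\eta_{\gamma,A_n,x}$ at $A_n$ yields the identity $\tau(\calT_{\gamma,n}\varphi_x(W_n)) = \gamma^{-1}\eta_{\gamma,A_n,x}$.

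For the bound, I would write $|\eta_{\gamma,A_n,x}| = |A_n||\varphi_x(A_n)|u_n(A_n+)$ and estimate $|\varphi_x(A_n)|$ using the explicit formulas \eqref{W-sol1}--\eqref{W-sol2}. Setting $\alpha:=-\gamma>0$, I would split on whether $|A_n|<1$ or $|A_n|\ge 1$, and within each case further split on the position of $x$ relative to $A_n$ and to $-1, 0$. The case $x>0$ gives $\varphi_x\equiv 0$; in the remaining cases one checks, for instance: if $A_n\le -1$ and $x<-1$ with $x\ge A_n$, that $|\varphi_x(A_n)| = 1-|A_n|^{-\alpha}|x|^\alpha \le 1-|A_n|^{-\alpha}$ since $|x|\ge 1$; if $A_n\le-1$ and $-1\le x\le 0$, that $|\varphi_x(A_n)|=|x|^\alpha(1-|A_n|^{-\alpha})\le 1-|A_n|^{-\alpha}$; and if $-1<A_n<0$ with $x\le A_n\le -1$ impossible, so one handles the symmetric cases where $|x|\le|A_n|<1$, yielding the bound $1-|A_n|^\alpha$. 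Combining the two ranges gives $|\varphi_x(A_n)|\le 1-(|A_n|\land|A_n|^{-1})^{-\gamma}$ uniformly in $x\in\R$.

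The hardest step is the case analysis of Step 4: the three-branch definition of $\varphi_x$, combined with the split on $|A_n|$ versus $1$, produces several subcases, and in the awkward subcase $A_n\le x\le 0$ one must use the inequality $|A_n|\ge|x|$ (equivalent to $A_n\le x$ for negative numbers) together with $|x|^\alpha\le 1$ in just the right way to match the claimed bound rather than a looser one like $1-|A_n|^{-\alpha}|A_n|^\alpha=0$ or a trivial $1$. Once this is done, finiteness is immediate from \eqref{W-Cond1-1}, which gives $u_n(A_n+)<\infty$, completing the proof in parallel with the Fréchet case.
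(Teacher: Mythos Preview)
Your proposal is correct and follows essentially the same route as the paper: the same integration-by-parts identity $\calT_{\gamma,n}\varphi_x(t)u_n(t)=-\gamma^{-1}(t\varphi_x(t)u_n(t))'$ combined with the boundary condition \eqref{W-beta}, followed by the same case split on $|A_n|\lessgtr 1$ using the explicit formulas \eqref{W-sol1}--\eqref{W-sol2} to bound $|\varphi_x(A_n)|$. The paper states the case-analysis conclusion in one line (``$|\varphi_x(A_n)|\le 1-|A_n|^{-\gamma}$ if $-1<A_n<0$ and $|\varphi_x(A_n)|\le 1-|A_n|^{\gamma}$ if $A_n\le -1$'') whereas you spell out the subcases more fully, but the argument is the same.
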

\begin{proof}
Since $W_n$ is a self-adjoint operator affiliated with $\calM$, we notice that $\calT_{\gamma,n}\varphi_x(W_n)\in \calM$ from the assumption \eqref{W-Cond4}, the definition of $\calT_{\gamma,n}$ and representations \eqref{W-sol1}-\eqref{W-sol2}. By the definitions of $\calT_{\gamma,n}$ and $\eta_{\gamma, A_n,x}$ and the condition \eqref{W-beta}, we have
\begin{align*}
\tau(\calT_{\gamma,n}\varphi_x(W_n))&=\int_{ A_n}^{ B_n} \calT_{\gamma,n}\varphi_x(t)u_n(t)dt\\
&=-\gamma^{-1}\int_{ A_n}^{ B_n}(t\varphi_x'(t)+\varphi_x(t)(1+t\rho_n(t)))u_n(t)dt\\
&=-\gamma^{-1}\int_{ A_n}^{ B_n}(\varphi_x'(t) t u_n(t) +\varphi_x(t)(u_n(t)+tu_n'(t)))dt\\
&=-\gamma^{-1}\int_{ A_n}^{ B_n} (\varphi_x(t)tu_n(t))'dt\\
&=\gamma^{-1}\eta_{\gamma, A_n,x}.
\end{align*}
Moreover, the explicit representations \eqref{W-sol1}-\eqref{W-sol2} show that $|\varphi_x( A_n)|\le 1- | A_n|^{-\gamma}$ if $-1<  A_n < 0$ and $|\varphi_x( A_n)|\le 1- | A_n|^{\gamma}$ if $ A_n \le -1$. Consequently, we obtain $|\varphi_x( A_n)|\le 1- (| A_n|\land | A_n|^{-1})^{-\gamma}$, and therefore $|\eta_{\gamma, A_n,x}|\le | A_n| \{1-(| A_n|\land | A_n|^{-1})^{-\gamma}\} u_n( A_n+)$. By \eqref{W-Cond1-1},  the value $| A_n| \{1-(| A_n|\land | A_n|^{-1})^{-\gamma}\} u_n( A_n+)$ is finite.
\end{proof}

\begin{thm}\label{W-Thm}
For each $n\ge2$ and $\gamma<0$, we have
\begin{align*}
d_K&(\scrF_{W_n},\Psi_\gamma)\\
&\le \tau(|1+\gamma^{-1}(1+W_n\rho_n(W_n))|) +\gamma^{-1} A_n\{1-(| A_n|\land | A_n|^{-1})^{-\gamma}\}u_n( A_n+)
\end{align*}
\end{thm}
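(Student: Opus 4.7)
The plan is to mirror the Stein's-method argument used in Theorem \ref{F-Thm}, adapted to the sign conventions of the Weibull operator $\calT_\gamma$. First, since $\varphi_x$ is a bounded solution of the ordinary differential equation \eqref{W-diff}, I can write
\begin{equation*}
\scrF_{W_n}(x) - \Psi_\gamma(x) = \tau(\bbm_{(-\infty,x]}(W_n)) - \Psi_\gamma(x) = \tau(\calT_\gamma \varphi_x(W_n)),
\end{equation*}
using the fact that the defining identity for $\varphi_x$ holds on $(-\infty,0)$ and the operator $W_n$ is supported in $[A_n,B_n] \subset (-\infty,0]$ by \eqref{W-Cond1}.

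Next, I subtract and add $\tau(\calT_{\gamma,n}\varphi_x(W_n))$ and invoke Lemma \ref{W-Lem3} to replace it by $\gamma^{-1}\eta_{\gamma,A_n,x}$. A direct computation starting from the definitions of $\calT_\gamma$ and $\calT_{\gamma,n}$ shows that the $w\varphi'(w)$ terms cancel (both carry coefficient $-\gamma^{-1}$), leaving
\begin{equation*}
\calT_\gamma \varphi(w) - \calT_{\gamma,n}\varphi(w) = \varphi(w)\bigl[1 + \gamma^{-1}(1 + w\rho_n(w))\bigr].
\end{equation*}
Substituting this yields the clean identity
\begin{equation*}
\scrF_{W_n}(x) - \Psi_\gamma(x) = \tau\bigl(\varphi_x(W_n)\bigl[1 + \gamma^{-1}(1 + W_n\rho_n(W_n))\bigr]\bigr) + \gamma^{-1}\eta_{\gamma,A_n,x}.
\end{equation*}

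To finish, I take absolute values, apply $\|\varphi_x\|_\infty \le 1$ together with the traciality and positivity of $\tau$ to bound the first summand by $\tau(|1 + \gamma^{-1}(1+W_n\rho_n(W_n))|)$, and use the second assertion of Lemma \ref{W-Lem3} to bound $|\gamma^{-1}\eta_{\gamma,A_n,x}|$ by $|\gamma|^{-1}|A_n|\{1-(|A_n|\land|A_n|^{-1})^{-\gamma}\}u_n(A_n+)$. Since $\gamma<0$ and $A_n<0$, this last quantity equals $\gamma^{-1}A_n\{1-(|A_n|\land|A_n|^{-1})^{-\gamma}\}u_n(A_n+)$, matching the boundary term in the statement. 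Taking the supremum over $x\in\R$ gives the Kolmogorov-distance bound.

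The only delicate point is bookkeeping with signs: unlike the Fréchet case, Lemma \ref{W-Lem3} produces $+\gamma^{-1}\eta_{\gamma,A_n,x}$ rather than $-\gamma^{-1}\eta_{\gamma,A_n,x}$, and the Weibull operator itself carries a leading $-\gamma^{-1}$. I expect no genuine obstacle beyond verifying that these sign flips cascade correctly so that the resulting boundary coefficient $\gamma^{-1}A_n\{1-(|A_n|\land|A_n|^{-1})^{-\gamma}\}u_n(A_n+)$ comes out nonnegative; this is indeed the case because $\gamma^{-1}A_n > 0$ and $(|A_n|\land|A_n|^{-1})^{-\gamma} \le 1$.
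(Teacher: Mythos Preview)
Your proof is correct and follows essentially the same approach as the paper's: both use the Stein identity $\scrF_{W_n}(x)-\Psi_\gamma(x)=\tau(\calT_\gamma\varphi_x(W_n))$, invoke Lemma~\ref{W-Lem3} to replace $\tau(\calT_{\gamma,n}\varphi_x(W_n))$ by $\gamma^{-1}\eta_{\gamma,A_n,x}$, compute $\calT_\gamma-\calT_{\gamma,n}$ to obtain the factor $1+\gamma^{-1}(1+w\rho_n(w))$, and then bound using $\|\varphi_x\|_\infty\le1$ together with the estimate on $|\eta_{\gamma,A_n,x}|$ from Lemma~\ref{W-Lem3}. Your sign bookkeeping is accurate and matches the paper's handling of the $\gamma<0$, $A_n<0$ case.
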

\begin{proof}
For each $x\in \R$, we have
\begin{align*}
\scrF_{W_n}(x)&-\Psi_\gamma(x)\\
&=\tau(\bbm_{(-\infty,x]}(W_n))-\Psi_\gamma(x)\\
&=\tau(\calT_\gamma\varphi_x(W_n)) \qquad \text{(by definition of $\calT_\gamma$)}\\
&=\tau(\calT_\gamma\varphi_x(W_n))-\tau(\calT_{\gamma,n}\varphi_x(W_n))+\gamma^{-1}\eta_{\gamma, A_n,x}\qquad \text{(by Lemma \ref{W-Lem3})}\\
&=\tau\left(\varphi_x(W_n)(1+\gamma^{-1}(1+W_n\rho_n(W_n)))\right) +\gamma^{-1}\eta_{\gamma, A_n,x}.
\end{align*}
Therefore we obtain
\begin{align*}
|\scrF_{W_n}(x)&-\Psi_\gamma(x)|\\
&\le \tau(|\varphi_x(W_n)(1+\gamma^{-1}(1+W_n\rho_n(W_n)))|) -\gamma^{-1} |\eta_{\gamma, A_n,x}|\\
&\le \tau(|1+\gamma^{-1}(1+W_n\rho_n(W_n))|) -\gamma^{-1} |\eta_{\gamma, A_n,x}| \qquad \text{(by $\|\varphi_x\|_\infty\le 1$)}\\
&\le \tau(|1+\gamma^{-1}(1+W_n\rho_n(W_n))|) +\gamma^{-1} A_n\{1-(| A_n|\land | A_n|^{-1})^{-\gamma}\}u_n( A_n+)
\end{align*}
by Lemma \ref{W-Lem3}. Taking the supremum of the left hand side for $x$, we get the inequality in this theorem.
\end{proof}

\begin{remark}
For $n\ge2$, we assume that the density function $u_n$ of $W_n$ is given by
$$
u_n(x)=C_n(x)|x|^{-\gamma-1}, \qquad x\in ( A_n, B_n),
$$
where $C_n:( A_n, B_n)\rightarrow (0,\infty)$ is differentiable on $( A_n, B_n)$ and satisfies that
\begin{align}
&\sup_{n\in \N} C_n( A_n+)<\infty, \label{W-Cond5-1}\\
&\lim_{n\rightarrow\infty} \sup_{x\in ( A_n, B_n)} \left|\frac{xC_n'(x)}{C_n(x)} \right|=0.\label{W-Cond5-2}
\end{align}
Moreover, we assume that $ A_n\rightarrow-1$ as $n\rightarrow\infty$. Then we have
\begin{align*}
d_K&(\scrF_{W_n},\Psi_\gamma)\\
& \le \tau \left(|1+\gamma^{-1}(1+W_n\rho_n(W_n))| \right)+ \gamma^{-1} A_n \{1-(| A_n| \land | A_n|^{-1})^{-\gamma}\} u_n( A_n+)\\
&=-\gamma^{-1}\int_{ A_n}^{ B_n}\left|\frac{xC_n'(x)}{C_n(x)} \right|u_n(x)dx- \gamma^{-1} | A_n|^{-\gamma} \{1-(| A_n|\land | A_n|^{-1})^{-\gamma}\} C_n( A_n+)\\
&\le -\gamma^{-1} \sup_{x\in ( A_n, B_n)} \left|\frac{xC_n'(x)}{C_n(x)} \right| - \gamma^{-1} | A_n|^{-\gamma} \{1-(| A_n|\land | A_n|^{-1})^{-\gamma}\} \sup_{n\in \N}C_n( A_n+)\\
&\xrightarrow{n\rightarrow\infty}0,
\end{align*}
by the conditions \eqref{W-Cond5-1} and \eqref{W-Cond5-2} and that $ A_n\rightarrow-1$ as $n\rightarrow\infty$.
\end{remark}

\begin{example}\label{W-ex}
Suppose that $n\ge2$. Let us consider $X_i$ a random variable distributed as the Weibull distribution $\calG_\gamma$ for some $\gamma<0$. Since $\Phi_\gamma \in \mathcal{D}_\boxplus(\Psi_\gamma; n^{\frac{1}{\gamma}},0)$, the random variable $W_n=(X_1\lor \cdots \lor X_n )/n^{\frac{1}{\gamma}}$ converges to $Z_\gamma$ in distribution as $n\rightarrow\infty$. A spectral distribution function of $W_n$ is given by
\begin{align*}
\scrF_{W_n}(t)=\calG_\gamma^{\Box\hspace{-.55em}\lor n} (n^{\frac{1}{\gamma}}t)=\begin{cases}
1, & t\ge0,\\
\max\left\{n\exp\left(-\frac{|t|^{-\gamma}}{n}\right)-(n-1),0\right\}, & t<0,
\end{cases}
\end{align*}
Consequently, a density function $u_n$ of random variable $W_n$ is written by
$$
u_n(t):=\frac{d}{dt}\scrF_{W_n}(t)=-\gamma|t|^{-\gamma-1}\exp\left(-\frac{|t|^{-\gamma}}{n}\right),
$$
for all $ A_n<t<0= B_n$, where $ A_n=-\left\{ -n\log\left(1-\frac{1}{n}\right)\right\}^{-\frac{1}{\gamma}} \in (-\infty, -1)$. An easy computation enables us to see that
\begin{align*}
\lim_{t\rightarrow A_n+0} |t|u_n(t)&=\gamma\left(n-1\right)\log\left(1-\frac{1}{n}\right)\in (0,\infty).\\
\lim_{t\rightarrow B_n-0} tu_n(t)&=0.
\end{align*}

It is easy to see that $u_n$ is differentiable on $( A_n,0)$. Moreover, we obtain
$$
\rho_n(t):=(\log u_n(t))'= -\frac{\gamma+1}{t}-\frac{\gamma}{n}|t|^{-\gamma-1},\qquad  t\in ( A_n,0),
$$
and therefore $t\mapsto t\rho_n(t)$ is bounded and continuous on $( A_n,0)$. Therefore the function $u_n$ satisfies the conditions \eqref{W-Cond1}-\eqref{W-Cond4}.

By Theorem \ref{W-Thm}, for $n\ge2$, we get the following estimate:
\begin{align*}
d_K&(\scrF_{W_n},\Psi_\gamma) \\
&\le\tau(|1+\gamma^{-1}(1+W_n\rho_n(W_n))|) +\gamma^{-1} A_n\{1-(| A_n|\land | A_n|^{-1})^{-\gamma}\}u_n( A_n+)\\
&=\int_{ A_n}^0 |1+\gamma^{-1}(1+t \rho_n(t))|u_n(t)dt +\gamma^{-1} A_n\{1-| A_n|^{\gamma}\}u_n( A_n+)\\
&=-\frac{\gamma}{n} \int_{ A_n}^0  |t|^{-2\gamma-1} \exp\left(-\frac{|t|^{-\gamma}}{n}\right)dt-(n-1)\log\left(1-\frac{1}{n}\right) \left\{ 1 + \frac{1}{n\log\left(1-\frac{1}{n}\right)}\right\}\\
&=\int_{ A_n}^0 |t|^{-\gamma} \left(-\frac{\gamma}{n}|t|^{-\gamma-1}\right) \exp\left(-\frac{|t|^{-\gamma}}{n}\right)dt-(n-1)\log\left(1-\frac{1}{n}\right)-1+\frac{1}{n}\\
&=\int_{ A_n}^0 |t|^{-\gamma}\left\{ \exp\left(-\frac{|t|^{-\gamma}}{n}\right)\right\}'dt-(n-1)\log\left(1-\frac{1}{n}\right)-1+\frac{1}{n}\\
&= -\gamma\int_{ A_n}^0 |t|^{-\gamma-1}\exp\left(-\frac{|t|^{-\gamma}}{n}\right)dt-1+\frac{1}{n}\\
&=n\int_{ A_n}^0\left\{ \exp\left(-\frac{|t|^{-\gamma}}{n}\right)\right\}'dt -1+\frac{1}{n}\\
&=\frac{1}{n}.
\end{align*}
It is clear that $d_K(\calF_{W_1},\Psi_\gamma)=d_K(\calF_{X_1},\Psi_\gamma)\le 1$. Hence we have
$$
d_K(\calF_{W_n},\Psi_\gamma)\le \frac{1}{n}
$$
for all $n\in \N$. The rate of convergence is the same as Examples \ref{G-ex} and \ref{F-ex} in spite of $\gamma<0$.
\end{example}

\section*{Acknowledgment}
This work was supported by JSPS KAKENHI Grant Number JP19H01791 and JSPS Open Partnership Joint Research Projects grant no. JPJSBP120209921. The author thanks to Professor Atsushi Takeuchi (Tokyo Woman's Christian University) for providing us useful information. 


\end{document}